\theoremstyle{plain}\newtheorem{Theorem}{Theorem}[section]
\theoremstyle{plain}\newtheorem{Conjecture}[Theorem]{Conjecture}
\theoremstyle{plain}
\theoremstyle{plain}
\theoremstyle{plain}\newtheorem{Proposition}[Theorem]{Proposition}
\theoremstyle{definition}
\theoremstyle{definition}
\theoremstyle{definition}
\theoremstyle{definition}
\theoremstyle{definition}
\theoremstyle{definition}
\theoremstyle{definition}
\def\sp{\mathrm{Sp}}
\def\Syl{\mathrm{Syl}}
\DeclareMathOperator{\su}{SU}
\DeclareMathOperator{\slg}{SL}
\DeclareMathOperator{\gu}{GU}
\DeclareMathOperator{\mor}{Mor}
\DeclareMathOperator{\syl}{Syl}
\DeclareMathOperator{\gl}{GL}
\title{Exotic Fusion System on a Subgroup of the Monster} 
\author{Patrick Serwene}
\address{Technische Universität Dresden, Faculty of Mathematics, 01062 Dresden, Germany}
\email{patrick.serwene@tu-dresden.de}
\date{\today}
\begin{document}

\begin{abstract}
We prove that an exotic fusion system described by Grazian on a subgroup of the Monster group is block-exotic, thus proving that exotic and block-exotic fusion systems are the same for all $p$-groups with sectional rank $3$, where $p \geq 5$. 
\end{abstract}

\maketitle
\section{Introduction}
Consider a prime number $p$ and a finite $p$-group $P$. We define a \textit{fusion system} as a category where the objects are the subgroups of $P$, referred to as a category on $P$, and the morphisms are injective group homomorphisms between these subgroups, subject to specific conditions. If these morphisms adhere to two additional axioms, we term the fusion system \textit{saturated}. For simplicity, we shall henceforth use the term "fusion system" to mean "saturated fusion system."\\
Every finite group $G$ gives rise to a fusion system $\mathcal F_P(G)$ on a Sylow $p$-subgroup $P$ of $G$, where the morphisms are defined as conjugation maps induced by a fixed element in $G$, where they are well-defined. A fusion system constructed in this manner is termed \textit{realizable}, while one that cannot be constructed as such is termed \textit{exotic}. Let $k$ be an algebraically closed field with characteristic $p$, and let $b$ be a block of $kG$. In this context, we can also define a fusion system on a defect group $P$ of $b$ by defining the morphisms as well-defined conjugation maps induced by an element in $G$. This fusion system is denoted by $\mathcal F_{(P,e_P)}(G,b)$, where $(P,e_P)$ is a maximal $b$-Brauer pair. Not every fusion system $\mathcal F$ can be realized in this way; if it can, we call $\mathcal F$ \textit{block-realizable}; otherwise, it is termed \textit{block-exotic}. The following fact follows from Brauer’s Third Main Theorem (see \cite[Theorem 3.6]{radha}): If $G$ is a finite group and $b$ is the principal $p$-block of $kG$, i.e., the block corresponding to the trivial character, with maximal $b$-Brauer pair $(P,e_P)$, then $P \in \syl_p(G)$ and $\mathcal{F}_{(P,e_P)}(G,b)=\mathcal{F}_P(G)$. Hence, any realizable fusion system is block-realizable. However, the converse remains an open problem and has been conjectured for some time, see \cite[Part IV,7.1]{ako} and \cite[9.4]{david}:

\begin{Conjecture}
\label{XX}
\textit{If $\mathcal F$ is an exotic fusion system, then $\mathcal F$ is block-exotic.}
\end{Conjecture}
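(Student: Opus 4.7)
The plan is to attempt Conjecture~\ref{XX} via a minimal counterexample argument organized around the Classification of Finite Simple Groups (CFSG). Assume $\mathcal F$ on $P$ is exotic and, for contradiction, block-realizable; choose a pair $(G,b)$ with $\mathcal F = \mathcal F_{(P,e_P)}(G,b)$ such that $|G|$ is minimal among all witnesses of the block-realizability of $\mathcal F$.

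The first phase consists of block-theoretic reductions that force $G$ into an almost-quasisimple shape. A Fong--Reynolds reduction lets one replace $G$ by the inertial subgroup of a covered block, so one may assume every normal $p'$-subgroup of $G$ lies in $Z(G)$. Brauer's First Main Theorem, applied via $N_G(P)$, shows that if $P$ is normal in $G$ then $\mathcal F$ is constrained and hence realizable as the fusion system of a finite group extension, contradicting exoticity; so $P$ is not normal in $G$. Using source-algebra equivalences, which preserve $\mathcal F_{(P,e_P)}(G,b)$ up to isomorphism, together with K\"ulshammer--Puig techniques to quotient by central $p'$-subgroups, one reduces to the setting where $F^\ast(G)/Z(G)$ is a direct product of non-abelian simple groups on which $P$ acts faithfully.

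The second phase invokes CFSG: one must show that for every finite quasisimple $G$ and every block $b$ of $G$ with defect group $P$, the block fusion system $\mathcal F_{(P,e_P)}(G,b)$ is realizable as $\mathcal F_Q(H)$ for some finite group $H$ with $Q \in \syl_p(H)$ and $Q \cong P$. The alternating and sporadic cases can be treated by appeal to the existing literature on blocks of these groups together with direct inspection of their Brauer categories. For Lie-type groups, the approach is to invoke Bonnaf\'e--Dat--Rouquier Morita equivalences to reduce to quasi-isolated or unipotent blocks; these in turn have fusion systems controlled by generic Sylow-$\Phi_d$ theory inside ambient reductive groups, from which realizability follows from the structure of the associated relative Weyl groups.

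The principal obstacle is the Lie-type portion of the second phase at bad primes and at small defect, where the Bonnaf\'e--Dat--Rouquier reductions do not apply cleanly and explicit descriptions of the Brauer categories of the resulting blocks are largely absent from the literature. A secondary difficulty is that the reduction to the quasisimple case may introduce a K\"ulshammer--Puig cohomology twist; while this class is irrelevant for the purely categorical fusion system, translating its presence into an actual finite group realizing $\mathcal F$ requires techniques not yet developed in full generality. These obstructions explain the status of Conjecture~\ref{XX} as an open problem, and motivate the present paper's restriction to the sectional-rank-$3$, $p \geq 5$ setting, where Grazian's classification reduces the CFSG phase to a single explicit exotic system to be handled.
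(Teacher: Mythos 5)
There is a genuine gap here, and it is worth being precise about what it is. The statement you are addressing is stated in the paper as a \emph{conjecture}, and the paper does not prove it; it proves only the special case recorded as Theorem~\ref{main} (sectional rank $3$, $p \geq 5$). Your text is likewise not a proof but a research program, and you concede as much in your final paragraph. The two places where the program genuinely breaks are the following. First, your ``first phase'' reduction to a quasisimple (or almost-quasisimple) $G$ is not available for an arbitrary exotic $\mathcal F$: block fusion systems do not pass well to normal subgroups, which is exactly the obstruction the paper flags when citing \cite{ks} and \cite{ps23}. The reduction theorems that do exist --- including Theorem~\ref{glastonbury}, which the paper uses --- require additional hypotheses on $\mathcal F$ (here: $Z(P)$ cyclic and $\mathcal F$ reduction simple, verified by hand in Proposition~\ref{redu} for the specific system on the group of order $7^5$). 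A minimal-counterexample argument driven by Fong--Reynolds and K\"ulshammer--Puig alone does not deliver a quasisimple target for a general $\mathcal F$, and the cohomology twist you mention is only one of the issues. Second, your ``second phase'' asserts that every block fusion system of every quasisimple group is realizable as a Sylow fusion system of some finite group; that assertion \emph{is} (the quasisimple case of) the conjecture, and it is open --- in particular for Lie-type groups in cross characteristic, where no general description of Brauer categories of blocks is known. Invoking Bonnaf\'e--Dat--Rouquier and generic Sylow-$\Phi_d$ theory names the relevant tools but does not close this.

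That said, your skeleton does mirror how the paper handles its special case: reduce to a quasisimple group (Proposition~\ref{redu}), then run through CFSG --- alternating groups are excluded because their Sylow $7$-subgroups of the relevant order are abelian, defining-characteristic Lie type because $P$ would be Sylow, cross-characteristic Lie type by the Fong--Srinivasan defect-group bounds (Propositions~\ref{bologna} and~\ref{hjelp}), and sporadic groups by \cite[Theorem 9.22]{david}. The difference is that the paper only ever needs to exclude one explicit $7$-group of order $7^5$ as a defect group, which is a finite check, whereas your program requires a classification of block fusion systems of quasisimple groups that does not currently exist. So the correct conclusion is the one the paper draws: the conjecture remains open, and what is actually provable is the restricted statement of Theorem~\ref{main}.
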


This conjecture is hard to tackle, since so far it has been mainly proved for one exotic family of fusion systems at a time, see \cite{solom}, \cite{david}, \cite{ks}, \cite{S20} or \cite{S24}. Since block fusion systems "misbehave" with regards to normal subgroups, a descend to normal subgroups and thus a reduction is not easily done and requires more general structures, see \cite{ks} or \cite{ps23} for an explanation.\\
In this paper, we prove Conjecture \ref{XX} for all fusion systems on $p$-groups with sectional rank $3$, where $p \geq 5$. A group has sectional rank $r$ if $r$ is the minimal integer such that any of its subgroups has at most rank $r$. In \cite{grazian}, fusion systems on $p$-groups of sectional rank $3$ are studied and if we further assume that $p \geq 5$ and $O_p(\mathcal F)=1$, Grazian proves in Theorem C that the only options for such a fusion system are either a unique exotic system on a group of order $7^5$, or the fusion system of $\sp_4(p)$ on its Sylow $p$-subgroup. The group of order $7^5$ hosting the exotic fusion system is a maximal subgroup of a Sylow $7$-subgroup of the Monster group. Here we prove that the fusion system is block-exotic too, giving rise to our main theorem:

\begin{Theorem}
\label{main}
\textit{Let $p \geq 5$ be a prime, $P$ a $p$-group of sectional rank $3$ and $\mathcal F$ be a fusion system on $P$ with $\mathcal O_p(\mathcal F)=1$. Then $\mathcal F$ is exotic if and only if it is block-exotic}.
\end{Theorem}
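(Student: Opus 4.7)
The ``if'' direction (block-exotic implies exotic) is immediate from Brauer's Third Main Theorem, as recorded in the introduction: every realizable system is block-realizable. For the ``only if'' direction, Grazian's Theorem C reduces the problem to the unique exotic fusion system $\mathcal F$ on the group $P$ of order $7^5$ that is a maximal subgroup of a Sylow $7$-subgroup of the Monster $\mathbb M$, so it suffices to show that this specific $\mathcal F$ is block-exotic.

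I would argue by contradiction. Assume $\mathcal F = \mathcal F_{(P,e_P)}(G,b)$ for a finite group $G$ and a block $b$ of $kG$ with defect group $P$. The first stage is a reduction to the case where $G$ is quasi-simple, following the standard chain of reductions developed in \cite{ks} and employed in the author's previous work \cite{S20,S24}. The ingredients are: choosing $(G,b)$ with $|G|$ minimal; passing to the normal closure of $b$; quotienting out $O_{p'}(G)$ and central $p$-subgroups lying in $\mathcal O_p(\mathcal F)$ (which is trivial by hypothesis); and descending along blocks of components via the Fong--Reynolds correspondence and results of K\"ulshammer--Puig and Alperin--Broué on Brauer pairs. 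At the end of this process $G$ is quasi-simple with $G/Z(G)$ simple, and $\mathcal F$ is still block-realized by $(G,b)$.

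The second stage is a CFSG-based elimination of all quasi-simple possibilities for $G$. For $G/Z(G)$ an alternating group, Sylow $7$-subgroups admit a wreath-product description incompatible with the structure of $P$. For $G/Z(G)$ of Lie type in defining characteristic $7$, blocks have full or zero defect and the full-defect case yields $\mathcal F_P(G)$, which is known to be realizable and hence not equal to the exotic $\mathcal F$. For $G/Z(G)$ of Lie type in non-defining characteristic, I would apply the Morita-equivalence reductions of Bonnaf\'e--Rouquier (and their extensions) to replace $b$ by a unipotent block of a Levi subquotient, so that the $7$-defect group and its fusion can be matched against $P$; one then runs through the list of simple groups of Lie type whose order is divisible by $7^5$ and eliminates each via the unipotent block classification. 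The sporadic case is handled by direct inspection: one enumerates the sporadic simple groups whose order is divisible by $7^5$, and for each candidate (most notably $\mathbb M$ itself, but also $\mathbb B$, $\fisch_{24}'$, $\con_1$, $\ly$, and a handful of further groups) one checks via $7$-local structure that no $7$-block has $P$ as defect group while inducing $\mathcal F$.

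The hardest step is expected to be the sporadic case, and in particular the Monster itself, since $P$ is by construction a subgroup of $\mathbb M$. The principal $7$-block of $\mathbb M$ has Sylow defect of order $7^6$, so it does not pose an immediate threat; the delicate point is to rule out non-principal $7$-blocks of $\mathbb M$ (and of almost simple subquotients containing $P$) with defect of order $7^5$ whose induced fusion could coincide with $\mathcal F$. This requires a careful comparison of Brauer pair inclusions and of the automorphizers of essential subgroups of $P$ inside $\mathbb M$ with those prescribed by $\mathcal F$. A secondary difficulty is the cross-characteristic Lie-type case, where large unipotent $7$-defect groups may occur and the abstract isomorphism type of the resulting defect group must be recognized by explicit character-theoretic or Deligne--Lusztig arguments.
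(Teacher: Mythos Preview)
Your overall architecture---Brauer's Third Main Theorem for one direction, Grazian's Theorem~C to pin down the single exotic system, reduction to quasisimple, then CFSG case analysis---matches the paper. The differences lie in how the individual cases are disposed of, and in your assessment of where the work is.

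First, the reduction to quasisimple groups in the paper does not proceed by the full Fong--Reynolds/K\"ulshammer--Puig chain you sketch. Instead it invokes Theorem~\ref{glastonbury} (from \cite{S20}), which requires $Z(P)$ cyclic and $\mathcal F$ reduction simple; both are verified directly for this $P$. That theorem only yields \emph{some} fusion system $\mathcal F_0$ on $P$ with $\mathcal O_p(\mathcal F_0)=1$ block-realized by a quasisimple group, not $\mathcal F$ itself; the paper then reapplies Grazian's uniqueness (Theorem~C) to force $\mathcal F_0=\mathcal F$. Your assertion that ``$\mathcal F$ is still block-realized by $(G,b)$'' after the reduction is exactly the delicate point these reductions do not automatically give, and the paper's uniqueness trick is what closes the gap.

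Second, your difficulty estimates are inverted. The sporadic case, which you flag as hardest, is in the paper a one-line citation: \cite[Theorem~9.22]{david} already shows that no block of a sporadic group has an exotic fusion system, so no inspection of $\mathbb M$, $\mathbb B$, etc.\ is needed. Conversely, the cross-characteristic Lie-type case is not handled via Bonnaf\'e--Rouquier and unipotent block classification as you propose; the paper instead uses Proposition~\ref{bologna} to pass to a group $H$ with $\SL_n(q^k)\le H\le\GL_n(q^k)$ (or the unitary analogue) and then a pure order argument (Proposition~\ref{hjelp}) comparing $|D'|$ against Fong--Srinivasan's lower bound $7^{7a+1}$ for non-abelian defect groups of $\GL_n(q^k)$, which gives an immediate contradiction. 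Your route would work in principle but is considerably heavier than what is needed here.
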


See \cite{ako} for details on (block) fusion systems or Section 2 of \cite{S20} for a more compact overview of the terms needed. In the only remaining section we prove our main theorem, using the Classification of Finite Simple Groups.

\section{Proof of Main Theorem}
Fix $P$ to be a maximal subgroup of a Sylow $7$-subgroup of the monster. We prove that the exotic fusion system on $P$ described by Grazian in \cite{grazian} is block-exotic too. We first reduce the problem to quasisimple groups and state the reduction theorem we apply. Recall that a fusion system is called reduction simple, when it has no non-trivial proper strongly $\mathcal F$-closed subgroups, i.e. subgroups $Q$ such that $\varphi(R) \leq Q$ for all $\varphi \in \mor(\mathcal F)$ defined on $R \leq Q$.

\begin{Theorem}
\cite[Theorem 3.5]{S20}
\label{glastonbury}
\textit{Let $P$ be a non-abelian $p$-group such that $Z(P)$ is cyclic and let $\mathcal F$ be a reduction simple fusion system on $P$. If $\mathcal F$ is block-realizable, then there exists a fusion system $\mathcal F_0$ on $P$ and a quasisimple group $L$ with an $\mathcal F_0$-block, where $\mathcal O_p(\mathcal F_0)=1$}.
\end{Theorem}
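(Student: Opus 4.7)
The plan is to start from an arbitrary block realization $\mathcal F = \mathcal F_{(P,e_P)}(G,b)$ of $\mathcal F$ and progressively reduce $G$ to a quasisimple group, exploiting the fact that normal subgroups of $G$ yield strongly $\mathcal F$-closed subgroups of $P$. Pick $(G,b)$ with $|G|$ minimal. For every $N \trianglelefteq G$, the intersection $P \cap N$ is strongly $\mathcal F$-closed (a standard feature of block fusion systems), so reduction simplicity gives $P \cap N \in \{1, P\}$. Via the Fong--Reynolds correspondence we may further assume $O_{p'}(G)=1$, whence any normal subgroup $N$ with $P \cap N = 1$ would be a nontrivial $p'$-subgroup, a contradiction. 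Consequently every nontrivial normal subgroup of $G$ contains $P$, so $G$ has a unique minimal normal subgroup $N$ with $P \leq N$. Since $P$ is non-abelian, $N$ cannot be elementary abelian, and since $O_{p'}(G)=1$ it cannot be a $p'$-group; therefore $N = L_1 \times \cdots \times L_k$ is a direct product of isomorphic non-abelian simple groups transitively permuted by $G$.

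The technical crux is ruling out $k \geq 2$, and this is where $Z(P)$ cyclic is decisive. Assuming $k \geq 2$, the embedding $P \hookrightarrow N$ combined with the conjugation action of $P$ on the set $\{L_1,\ldots,L_k\}$ lets one produce independent central elements of $P$ out of each $P$-orbit of components: for any orbit $\mathcal{U}$ of length $\ell$, a nontrivial central element of one $L_i \in \mathcal{U}$ yields, via the ``norm across the orbit'', a central element of $P$ supported on $\mathcal{U}$. Since $Z(P)$ is cyclic this forces a single $P$-orbit; further analysis inside that orbit, using non-abelianness of $P$ and the Sylow-$p$-structure of the $L_i$, rules out $k \geq 2$ altogether. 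Thus $N$ is a single non-abelian simple group.

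Setting $L = N$, which is quasisimple, standard block-covering results (Dade, Fong--Reynolds) show that $b$ covers a $G$-conjugate block $b_0$ of $L$ whose defect group can be identified with $P$, yielding a fusion system $\mathcal F_0 := \mathcal F_{(P, e_P')}(L, b_0)$ on $P$ block-realized by the quasisimple group $L$. Finally, $\mathcal{O}_p(\mathcal F_0) = 1$: a nontrivial normal $p$-subgroup of $\mathcal F_0$ would, by the same strongly-closed/normal-subgroup correspondence applied inside $L$, yield a nontrivial normal $p$-subgroup of $L$, which is impossible for a quasisimple group. The main obstacle is the second paragraph: the $P$-permutation action on the components may be nontrivial and interleaves awkwardly with the direct-product decomposition of $N$, so the bookkeeping required to manufacture enough independent elements of $Z(P)$ to contradict cyclicity is the delicate part; the initial and final reductions are by comparison routine applications of block-theoretic machinery.
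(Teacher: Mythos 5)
A preliminary remark: the paper does not prove this statement at all --- it is quoted verbatim from \cite[Theorem 3.5]{S20}, so there is no internal proof to compare against, and your sketch has to be judged on its own terms. Your overall strategy (minimal block realization, strong closure of $P \cap N$ for $N \trianglelefteq G$, Fong--Reynolds reductions, passage to the unique minimal normal subgroup, and cyclicity of $Z(P)$ to kill the multi-component case) is indeed the Kessar--Stancu-style reduction from \cite{ks} that underlies the cited proof, so the architecture is right.

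However, three steps are genuinely gapped. First, from $P \cap N = 1$ you conclude that $N$ is a $p'$-group; this is false, since $N$ could carry a covered block of defect zero while $p$ divides $|N|$. Eliminating such $N$ requires the Fong reductions together with minimality of $|G|$, not the bare assumption $O_{p'}(G)=1$ --- and those reductions only allow one to make $O_{p'}(G)$ central and cyclic, which is exactly why the theorem concludes with a \emph{quasisimple} $L$ rather than a simple one. Second, your ``technical crux'' attacks the wrong configuration: once $P \leq N = L_1 \times \cdots \times L_k$, the group $P$ normalizes every component (each $L_i$ is normal in $N$), so there is no nontrivial $P$-action on $\{L_1,\dots,L_k\}$ and no ``norm across the orbit'' to take. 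The correct and much shorter argument is that a defect group of a block of a direct product decomposes as $P = P_1 \times \cdots \times P_k$ with $P_i$ a defect group of the covered block of $L_i$, so $Z(P) = Z(P_1) \times \cdots \times Z(P_k)$ and cyclicity forces all but one $P_i$ to be trivial; but this leaves the case $P \leq L_1$ with the remaining components carrying defect-zero blocks, which your sketch never addresses and which is where the transitive $G$-action on components and a further reduction must enter. Third, the closing claim that $O_p(\mathcal F_0) \neq 1$ would force $O_p(L) \neq 1$ is false in general: block fusion systems of simple groups can perfectly well have nontrivial $O_p$ (nilpotent blocks being the extreme case), so $O_p(\mathcal F_0)=1$ cannot be deduced from quasisimplicity of $L$ alone and needs an argument tying $\mathcal F_0$ back to the reduction simplicity of $\mathcal F$.
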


\begin{Proposition}
\label{redu}
\textit{Let $\mathcal F$ be the exotic fusion system on $P$. If $\mathcal F$ is block-realizable, it is block-realizable by the block of a finite quasisimple group}.
\end{Proposition}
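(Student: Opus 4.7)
The plan is to apply Theorem \ref{glastonbury} directly to $\mathcal{F}$ and then use Grazian's classification to force the resulting auxiliary system $\mathcal{F}_0$ to equal $\mathcal{F}$. Three hypotheses must be checked: $P$ is non-abelian, $Z(P)$ is cyclic, and $\mathcal{F}$ is reduction simple. Non-abelianness is automatic, since every fusion system on an abelian $p$-group is realizable (by the group itself) whereas $\mathcal{F}$ is exotic. Cyclicity of $Z(P)$ I would read directly from the explicit presentation of $P$ as a maximal subgroup of a Sylow $7$-subgroup of the Monster recorded in \cite{grazian}.

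The substantive hypothesis is reduction simplicity, i.e.\ that $\mathcal{F}$ admits no non-trivial proper strongly $\mathcal{F}$-closed subgroup. I would verify this by combining two ingredients from Grazian's analysis: $\mathcal{F}$ is generated by $\aut_{\mathcal{F}}(P)$ together with the $\mathcal{F}$-automorphisms of the (short, explicitly listed) essential subgroups, and the assumption $O_p(\mathcal{F})=1$ forces the orbits of $\mathcal{F}$-conjugation to generate $P$. A non-trivial proper strongly closed $Q\lneq P$ would have to be stable under every such automorphism, and a direct inspection of Grazian's essentials in the order-$7^5$ case rules this out.

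Once the hypotheses are in place, Theorem \ref{glastonbury} produces a fusion system $\mathcal{F}_0$ on $P$ with $O_p(\mathcal{F}_0)=1$ together with a quasisimple group $L$ realizing $\mathcal{F}_0$ by one of its blocks. To conclude, I invoke Theorem C of \cite{grazian}: any fusion system on a $p$-group of sectional rank $3$ with $p\geq 5$ and trivial $p$-core is either $\mathcal{F}$ itself or the Sylow fusion system of $\spdd_4(p)$, the latter living on a group of order $p^4$. Since $\mathcal{F}_0$ is on $P$, which has order $7^5$, we must have $\mathcal{F}_0=\mathcal{F}$, and hence $\mathcal{F}$ is block-realized by a block of the quasisimple group $L$.

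The principal obstacle is the verification of reduction simplicity. Although one expects it from the exotic and $O_p$-trivial nature of $\mathcal{F}$, a careful inspection of Grazian's list of $\mathcal{F}$-essential subgroups and of their automizers inside $P$ is needed to rule out proper strongly closed subgroups; the remaining hypotheses are structural facts about $P$ readable off its presentation, and the final identification $\mathcal{F}_0=\mathcal{F}$ is immediate from Grazian's classification once the orders are compared.
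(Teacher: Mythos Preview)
Your proposal is correct and follows the same overall strategy as the paper: verify the hypotheses of Theorem~\ref{glastonbury}, apply it to obtain $\mathcal{F}_0$ on $P$ with $O_p(\mathcal{F}_0)=1$, and then invoke Grazian's Theorem~C to force $\mathcal{F}_0=\mathcal{F}$. The only substantive difference lies in how reduction simplicity is established. The paper gives a shorter argument: if $1\neq N\leq P$ is strongly $\mathcal{F}$-closed then in particular $N\unlhd P$, so $N\cap Z(P)\neq 1$; since $Z(P)\cong C_7$ this yields $Z(P)\leq N$, and the paper then cites the proof of \cite[Theorem~4.3.1]{grazianpearls} to conclude $N=P$. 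This bypasses the direct inspection of essential subgroups and their automizers that you sketch, trading your case analysis for an appeal to an existing result in the literature.
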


\begin{proof} Clearly $P$ has cyclic centre. Assume $1 \neq N \leq P$ is strongly $\mathcal F$-closed. In particular $N \unlhd P$, which implies $Z(P) \leq N$. Thus, as in the proof of \cite[Theorem 4.3.1]{grazianpearls}, we obtain $N=P$, which means that $\mathcal F$ is reduction simple. By Theorem \ref{glastonbury}, there exists a fusion system $\mathcal F_0$ on $P$ and a quasisimple group $L$ with an $\mathcal F_0$-block, where $\mathcal O_p(\mathcal F_0)=1$. However, by Theorem C of \cite{grazian} we deduce $\mathcal F=\mathcal F_0$.
\end{proof}

The following result is useful for proving block-exoticity.

\begin{Proposition}
\label{bologna}
\cite[Proposition 4.3]{S20}
\textit{Let $G$ be a quasisimple finite group and denote the quotient $G/Z(G)$ by $\overline G$. Suppose $\overline{G}=G(q)$ is a finite group of Lie type and let $p$ be a prime number $\geq 7$, $(p,q)=1$. Let $D$ be a $p$-group such that $Z(D)$ is cyclic of order $p$ and $Z(D) \subseteq [D,D]$. If $D$ is a defect group of a block of $G$, then there are $n, k \in \mathbb{N}$ and a finite group $H$ with $\slg_n(q^k) \leq H \leq \gl_n(q^k)$ $($or $\su_n(q^k) \leq H \leq \gu_n(q^k))$ such that there is a block $c$ of $H$ with non-abelian defect group $D'$ such that $D'/Z$ is of order $|D/Z(D)|$ for some $Z \leq D' \cap Z(H)$}.
\end{Proposition}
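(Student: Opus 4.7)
The plan is to combine the classification of $p$-blocks of finite reductive groups due to Cabanes--Enguehard with a case-by-case analysis by Lie type of $\overline{G}$. First I would note that the hypotheses $|Z(D)|=p$ and $Z(D)\subseteq [D,D]$ force $D$ to be non-abelian, since if $D$ were abelian then $[D,D]=1$ would contradict $Z(D)\neq 1$. The question thus becomes: in which quasisimple groups of Lie type in non-defining characteristic $p\geq 7$ do non-abelian defect groups occur, and what form do they take?

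Since $p\geq 7$, $p$ is a good (in fact very good) prime for every irreducible root system and does not divide the order of the Schur multiplier of any simple group of Lie type that enters the analysis. Consequently the main theorem of Cabanes--Enguehard applies without exceptional cases: letting $d$ be the multiplicative order of $q$ modulo $p$, it yields a bijection between the $p$-blocks of $G$ and $G$-conjugacy classes of $d$-cuspidal pairs $(L,\lambda)$ such that defect groups correspond up to a central $p'$-correction. The next step is to determine, type by type, which $d$-cuspidal Levis $L$ can produce a non-abelian Sylow $p$-subgroup of $C_G(L)$. For exceptional types ($G_2, F_4, E_6, E_7, E_8$ and their twisted analogues) and for classical types $B$, $C$ and $D$, a direct inspection of the known lists of $d$-cuspidal Levis---noting that $p\geq 7$ excludes the small bad primes---shows that all the relevant Sylow $p$-subgroups are abelian. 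This leaves only type $A$, linear or unitary.

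In type $A$, a $d$-cuspidal Levi has the shape $T\times\gl_m(q^e)$ (respectively $T\times\gu_m(q^e)$) with $T$ a torus, and the non-abelian part of the defect group of the $G$-block comes from the wreathed Sylow $p$-subgroup of $\gl_m(q^e)$ (resp.\ $\gu_m(q^e)$). Applying Fong--Reynolds reduction together with a Bonnaf\'e--Rouquier Morita equivalence to pass from $G$ to a suitable local subgroup then yields a block $c$ of a group $H$ satisfying $\slg_n(q^k)\leq H\leq\gl_n(q^k)$ (or the unitary analogue) whose defect group $D'$ agrees with $D$ outside a central subgroup. The main obstacle is precisely this last bookkeeping step: matching $D'/Z$ with $D/Z(D)$ requires carefully tracking the interplay between the centre of $H$, the Schur multiplier of $\overline{G}$, and the central $p'$-correction in the Cabanes--Enguehard bijection, all of which should ultimately be controlled by the hypothesis $p\geq 7$.
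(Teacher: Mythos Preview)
The paper does not supply its own proof of this proposition: it is quoted verbatim from \cite[Proposition~4.3]{S20} and used as a black box. There is therefore no argument in the present paper to compare your sketch against; the proof you would need to reconstruct is the one in \cite{S20}.

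That said, your sketch contains a genuine gap. The assertion that for classical types $B$, $C$, $D$ ``all the relevant Sylow $p$-subgroups are abelian'' once $p\geq 7$ is false. Take $p=7$, $d=1$, and $\overline{G}=\mathrm{PSp}_{14}(q)$ with $7\parallel q-1$. The principal $7$-block has as defect group a Sylow $7$-subgroup, which is isomorphic to $C_7\wr C_7$; this group is non-abelian, its centre is the diagonal copy of $C_7$, and since the diagonal element lies in the augmentation kernel of the base one has $Z(D)\subseteq [D,D]$. Thus $D$ satisfies all the hypotheses of the proposition while $\overline{G}$ is of type $C$. Similar wreath-product defect groups occur throughout types $B$, $C$, $D$ whenever the relevant weight reaches $p$, and non-abelian Sylow $7$-subgroups also appear in $E_7(q)$ and $E_8(q)$. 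Consequently the reduction to type $A$ cannot proceed by eliminating the other types on grounds of abelian defect; one needs a different mechanism---for instance, exhibiting directly a block of a suitable $\gl_n(q^k)$ or $\gu_n(q^k)$ whose defect group has the required order $|D/Z(D)|\cdot|Z|$, which is how the conclusion of the proposition is actually phrased (note it only asserts a numerical match $|D'/Z|=|D/Z(D)|$, not an isomorphism of defect groups). Your outline does not provide this step for the non-$A$ classical and exceptional types.
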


\begin{Proposition}
\label{hjelp}
\textit{If $G$ is as in the previous proposition, then $G$ has no blocks with defect groups isomorphic to $P$}.
\end{Proposition}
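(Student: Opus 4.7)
The plan is to assume for contradiction that $G$ has a block with defect group isomorphic to $P$, apply Proposition \ref{bologna} to descend to a classical group, and derive a contradiction from sectional rank.

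First I would verify that $P$ satisfies the hypotheses of Proposition \ref{bologna}: from Grazian's explicit description in \cite{grazian}, $Z(P)$ is cyclic of order $p = 7$ and $Z(P) \subseteq [P,P]$. The proposition then yields positive integers $n$ and $k$, a group $H$ with $\slg_n(q^k) \leq H \leq \gl_n(q^k)$ (or its unitary analogue), and a block $c$ of $H$ with non-abelian defect group $D'$ such that $|D'/Z| = |P/Z(P)| = 7^4$ for some $Z \leq D' \cap Z(H)$.

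Since $Z(\gl_n(q^k))$ and $Z(\gu_n(q^k))$ are cyclic, $Z(H)$ is cyclic and hence $Z$ is cyclic. Therefore $D'$ is a central extension of a quotient of $P$ by a cyclic group; as $P$ has sectional rank $3$, every subgroup of $D'/Z$ is generated by at most $3$ elements, and consequently every subgroup of $D'$ is generated by at most $4$ elements. So the sectional rank of $D'$ is at most $4$.

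To finish, I would invoke the classification of defect groups of $p$-blocks of finite general linear and unitary groups (due to Fong--Srinivasan and Broué--Michel, together with the standard passage between $\slg_n$ and $\gl_n$, respectively $\su_n$ and $\gu_n$): such defect groups, modulo a central $p$-subgroup of $H$, are direct products of wreath products of the form $C_{p^{a_i}} \wr W_i$ with $W_i$ a Sylow $p$-subgroup of a symmetric group $S_{w_i}$. Non-abelianness of $D'$ forces some $w_i \geq p = 7$, and the corresponding factor contains an elementary abelian subgroup of rank at least $p - 1 = 6$ (for instance inside the derived subgroup of $C_{p^{a_i}} \wr C_p$). This would force the sectional rank of $D'$ to be at least $6$, contradicting the bound of $4$. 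The main obstacle will be a careful matching of the structure theorem, which is usually stated for $\gl$ and $\gu$, with the defect groups of the intermediate group $H$ and with $D'$ itself rather than a central quotient; once that bookkeeping is in place, the inequality $6 > 4$ closes the argument.
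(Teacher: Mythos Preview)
Your reduction via Proposition \ref{bologna} is the right opening move and matches the paper. The gap is in the sentence ``Therefore $D'$ is a central extension of a quotient of $P$ by a cyclic group; as $P$ has sectional rank $3$, every subgroup of $D'/Z$ is generated by at most $3$ elements.'' Proposition \ref{bologna} only asserts that $|D'/Z|=|P/Z(P)|=7^4$; it says nothing about the isomorphism type of $D'/Z$, so you cannot transport the sectional rank of $P$ into $D'/Z$. The repair is easy---any $7$-group of order $7^4$ trivially has sectional rank at most $4$, hence $D'$ has sectional rank at most $5$---but then your lower bound needs to be tightened, since the wreath product you exhibit lives in the defect group $D''$ of the covering block of $\gl_n(q^k)$, and passing to $D'=D''\cap H$ may cost one in rank. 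Using the full base group $C_7^7$ of rank $7$ (rather than the rank-$6$ derived subgroup) restores enough margin to close the argument.

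For comparison, the paper avoids rank altogether and runs a pure order count: with $7^a=|q^k-1|_7$ one has $|Z|\leq |Z(\slg_n(q^k))|_7\leq 7^a$, so $|D'|\leq 7^{4+a}$, hence the covering block of $\gl_n(q^k)$ has defect group of order at most $7^{2a+4}$; but by \cite[Theorem~3C]{torquay} a non-abelian defect group of $\gl_n(q^k)$ has order at least $7^{7a+1}$, a contradiction (and similarly in the unitary case with $|q^k+1|_7$). Your rank inequality is morally the same phenomenon---the exponent $7a$ is exactly the size of the rank-$7$ base group---read off structurally rather than numerically.
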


\begin{proof} Recall $|P|=7^5$ and $Z(P) \cong C_7$. We apply Proposition \ref{bologna} with $P$ taking the role of $D$. Let $H$, $D'$ be as in its assertion with $p=7$. Assume first $H \leq \gl_n(q^k)$ and let $a$ be such that $|q^k-1|_7=7^a$. Then, since $\slg_n(q^k) \leq H$, we have $|D'|=|P/Z(P)| \cdot |Z|=7^4|Z| \leq 7^4|Z(H)| \leq 7^4|Z(\slg_n(q^k))|\leq 7^{4+a}$. Now the block of $k\gl_n(q^k)$ covering $c$ has a defect group of order at most $7^{2a+4}$. But it is a well-known fact, that (non-abelian) defect groups of $\gl_n(q^k)$ have order at least $7^{7a+1}$, see \cite[Theorem 3C]{torquay}. Thus, $7^{7a+1} \leq 7^{2a+4}$, which is a contradiction. The case $H \leq \gu_n(q^k)$ can be shown in the same fashion by considering the $7$-part of $q^k+1$ instead of $q^k-1$. \end{proof}

\textit{Proof of Theorem \ref{main}}.
By \cite[Theorem C]{grazian}, the only exotic fusion system on such a group is a fusion system on a maximal subgroup $P$ of a Sylow $7$-subgroup of the Monster with $|P|=7^5$. Thus, by \cite[Theorem 3.6]{radha}, we only need to prove block-exoticity of this system, denote it by $\mathcal F$.\\
By Proposition \ref{redu}, if $\mathcal F$ is block-realizable, we may assume that it is block-realizable by the block of a finite quasisimple group $G$ having $P$ as defect group. We use the classification of finite simple groups to exclude all possibilites for $G$. Clearly, we can assume that $G$ is non-abelian.\\
Firstly, assume $G/Z(G)$ is an alternating group $\mathfrak A_m$. Then $P$ is isomorphic to a Sylow $7$-subgroup of some symmetric group $\mathfrak S_{7w}$ with $w \leq 6$. Define the cycle $\sigma_i=((i-1)7+1,(i-1)7+2,\dots,i7)$ and the subgroup $S'=\langle \sigma_1, \dots, \sigma_6 \rangle\leq \mathfrak A_m$. Then $S' \in \syl_7(\mathfrak A_m)$. But this group is abelian, which means that $P \notin \Syl_7(\mathfrak A_m)$ and thus it cannot be the defect group of a block of $G$.\\
Next, assume $G$ is a group of Lie type. First assume the latter group is defined over a field of characteristic $7$, then $P$ is a Sylow $7$-subgroup of $G$ by \cite[Theorem 6.18]{caen} and thus $\mathcal F$ cannot be exotic. In particular, we can assume $G$ is defined over a field of order coprime to $7$. However, by Proposition \ref{hjelp}, we see that case in cross characteristic is not possible either.\\
Finally, assume $G/Z(G)$ is one of the sporadic groups. By, \cite[Theorem 9.22]{david}, the fusion system of a block of a sporadic group can not be exotic. This proves the theorem. \hfill$\square$\\

\bigskip

{\it Acknowledgements.} I thank Chris Parker and the University of Halle--Wittenberg to provide a platform to meet him. Furthermore, I thank the referee for carefully reading my manuscript and Pham Huu Tiep for communicating this paper.


\end{document}